\titleformat{\subsection}[runin]{\bf}{\thesubsection.}{3pt}{}
\DeclareMathOperator{\MCG}{\mathcal{MCG}}
\DeclareMathOperator{\PMCG}{\mathcal{PMCG}}
\DeclareMathOperator{\End}{End}
\DeclareMathOperator{\Id}{Id}
\newtheorem{thm}{Theorem}
\newtheorem{lem}{Lemma}[section]
\newtheorem{prop}[lem]{Proposition}
\newtheorem{ques}[lem]{Question}
\theoremstyle{remark}
\newtheorem{rem}{Remark}[section]
\theoremstyle{definition}
\definecolor{upsPurp}{RGB}{99,0,60}
\definecolor{upsRed}{RGB}{198,11,70}
\definecolor{upsCyan}{RGB}{0,148,181}
\mathchardef\mhyphen="2D
\renewcommand\thesubsection{\thesection.\Alph{subsection}}
\numberwithin{equation}{section}
\newcommand\blfootnote[1]{%
  \begingroup
  \renewcommand\thefootnote{}\footnote{#1}%
  \addtocounter{footnote}{-1}%
  \endgroup
}
\title{{\bf Big mapping class groups are not extremely amenable}}
\author{{\sc Yusen Long}}
\date{}
\begin{document}
\maketitle
\blfootnote{\emph{Date}: July 2023.\\ The author is supported by CSC-202108170018.}

\begin{abstract}
This paper uses the renowned Kechris-Pestov-Todor\v{c}evi\'{c} machinery to show that (big) mapping class groups are not extremely amenable unless the underlying surface is a sphere or a once-punctured sphere, or equivalently when the mapping class group is trivial. The same techniques also show that the pure mapping class groups, as well as compactly supported mapping class groups, of a surface with genus at least one can never be extremely amenable.
\end{abstract}
\noindent{\it Keywords}: mapping class groups, extreme amenability, Polish groups.

\vspace{0.25cm}
\noindent{\it 2020 Mathematics subject classification}: 57K20, 54H11.


\section{Introduction}
Let $\Sigma$ be an orientable surface. A theorem of Richards \cite{richards1963classification} asserts that it is characterised by the following parameters: genus and the space of ends. Let $g(\Sigma)$ be genus of the surface $\Sigma$ and $n(\Sigma)$ be the cardinality of its ends. The complexity of the surface is defined as $c(\Sigma)\coloneqq 3g(\Sigma)+n(\Sigma)-3$. Such a surface is said to have \emph{finite type} if $c(\Sigma)<\infty$, otherwise it is of \emph{infinite type}.

Define the mapping class group of $\Sigma$ by $\MCG(\Sigma)\coloneqq\mathrm{Homeo}^+(\Sigma)/\{\mathrm{isotopy}\}$, where $\mathrm{Homeo}^+(\Sigma)$ is the group of orientation preserving automorphisms of the surface, equipped with compact-open topology, and $\MCG(\Sigma)$ is carrying the quotient topology. With this topology, the group $\MCG(\Sigma)$ is a \emph{non-archimedean Polish group} (see Proposition \ref{prop_non-arch}), or equivalently it is a closed subgroup of $S_\infty$, the symmetric group of $\mathbb{N}$, with respect to the pointwise convergence topology (see for example \cite[\S 1.5]{becker1996descriptive}). A mapping class group is \emph{big} if the underlying surface is of infinite type.

From a model theoretic aspect, a non-archimedean Polish group can always be realised as the automorphism group of some countable first-order relational structure. Big mapping class groups fall into this category (see Proposition \ref{prop_non-arch}).

Recall that a topological group $G$ is said to have \emph{fixed point on compacta property} or \emph{extremely amenable} if every continuous $G$-action on a compact Hausdorff space admits a fixed point. However, it is worth noticing that, other than the trivial group, no locally compact groups are extremely amenable \cite{veech1977topological}, thus \emph{a fortiori} no discrete ones \cite{ellis1960universal}.

In a celebrated paper \cite{kechris2005fraisse}, Kechris, Pestov and Todor\v{c}evi\'{c} develop a surprising correspondence (\emph{abbrv.} KPT correspondence) between model theory, combinatorics and topological dynamics: if $\mathcal{F}$ is a structure with universe $\mathbb{N}$, then the non-archimedean Polish group $\mathrm{Aut}(\mathcal{F})$ is extremely amenable if and only if the \emph{age} $\mathrm{Age}(\mathcal{F})$ has Ramsey property (see \cite{kechris2005fraisse} for detail). So it is a natural question to ask for a non-archimedean Polish group arising in the study of a geometry object whether or not it is extremely amenable. \emph{are big mapping class groups extremely amenable?} This paper completely answers this question. One shall notice that big mapping class groups are not locally compact \cite[Theorem 4.2]{aramayona2020big}.

\begin{thm}\label{thm1}
Let $\Sigma$ be an orientable surface of finite or infinite topological type. Then $\MCG(\Sigma)$ is not extremely amenable unless $\Sigma$ is a sphere or a once-punctured sphere, in which cases the mapping class groups are trivial.
\end{thm}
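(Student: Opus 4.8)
The plan is to argue according to the topological type of $\Sigma$; the excepted surfaces are all of finite type, so the substance lies in the infinite-type case, which is where the Kechris--Pestov--Todor\v{c}evi\'c machinery enters. Suppose first that $\Sigma$ has finite type. Then $\MCG(\Sigma)$ is a countable \emph{discrete} group, since the mapping classes of homeomorphisms isotopic to the identity form an open subgroup of $\mathrm{Homeo}^+(\Sigma)$ for the compact--open topology (this is part of Proposition~\ref{prop_non-arch}). If $\Sigma$ is a sphere or a once-punctured sphere then $\MCG(\Sigma)$ is trivial, hence extremely amenable, which is the excepted case. For every other finite-type surface $\MCG(\Sigma)$ is nontrivial: the Dehn twist about any essential non-peripheral simple closed curve is nontrivial, and the two remaining surfaces --- the twice- and thrice-punctured spheres --- carry a mapping class permuting their punctures. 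A nontrivial discrete group, being a nontrivial locally compact group, is not extremely amenable by Veech's theorem~\cite{veech1977topological} (or already by Ellis~\cite{ellis1960universal}), as recalled in the introduction. This proves Theorem~\ref{thm1} for finite-type surfaces.

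Now suppose $\Sigma$ has infinite type, so it is none of the excepted surfaces, and let $\mathcal S$ be the (nonempty) set of isotopy classes of essential non-peripheral simple closed curves on $\Sigma$, on which $\MCG(\Sigma)$ acts by permutations. By Proposition~\ref{prop_non-arch} the group $\MCG(\Sigma)$ is non-archimedean and the stabiliser of each element of $\mathcal S$ is open: a compact--open basic neighbourhood of the identity fixing a representative curve inside a fixed annular neighbourhood of it lies inside that stabiliser, and stabilisers of finite multicurves form a neighbourhood basis of the identity. Consequently $\MCG(\Sigma)$ acts continuously on the compact Hausdorff space $\mathrm{LO}(\mathcal S)\subseteq 2^{\mathcal S\times\mathcal S}$ of linear orders on $\mathcal S$ (a closed invariant subspace of a Tychonoff cube), since the value of $\phi\cdot{<}$ at a pair of curves depends only on the action of $\phi$ on two fixed curves, an open condition on $\phi$. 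If $\MCG(\Sigma)$ were extremely amenable this flow would have a fixed point, i.e.\ there would be a $\MCG(\Sigma)$-invariant linear order on $\mathcal S$. This is the concrete form of the fact we use: presenting $\MCG(\Sigma)$ as the automorphism group of a countable relational structure, an invariant linear order forces all finite substructures to be rigid, so by the KPT correspondence~\cite{kechris2005fraisse} it suffices to exhibit a mapping class interchanging two curves --- equivalently, a non-rigid finite substructure, whence the age fails to be Ramsey.

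So I must produce $\phi\in\MCG(\Sigma)$ and distinct $c_1,c_2\in\mathcal S$ with $\phi(c_1)=c_2$ and $\phi(c_2)=c_1$; then no $\MCG(\Sigma)$-invariant linear order on $\mathcal S$ can exist, since $\phi$ reverses any ordering of $\{c_1,c_2\}$. If $\Sigma$ has infinite genus, fix a one-holed genus-two subsurface $\Sigma_0\subseteq\Sigma$, let $a_1,a_2\subseteq\Sigma_0$ be standard non-separating curves on its two handles, and let $\phi$ be the extension by the identity of a homeomorphism of $\Sigma_0$ fixing $\partial\Sigma_0$ pointwise and interchanging the two handles; then $\phi$ swaps the distinct elements $a_1,a_2$ of $\mathcal S$. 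If $\Sigma$ has finite genus then, being of infinite type, it has infinitely many ends, so $E(\Sigma)$ is an infinite compact metrisable totally disconnected space, and an elementary Cantor--Bendixson argument yields two disjoint clopen subsets $U_1\cong U_2\subseteq E(\Sigma)$ of size at least $2$ (two disjoint pairs of isolated ends if there are infinitely many isolated ends, and otherwise two Cantor halves of the clopen perfect part). Pick $c_i\in\mathcal S$ bounding a \emph{planar} subsurface $R_i$ of $\Sigma$ with end set $U_i$ (possible since the total genus is finite), and build $\phi\in\mathrm{Homeo}^+(\Sigma)$ from an orientation-preserving homeomorphism $R_1\to R_2$, its inverse $R_2\to R_1$, and a homeomorphism of $\Sigma\setminus(R_1\cup R_2)$ --- a surface whose boundary is exactly $c_1\sqcup c_2$ --- that interchanges those two boundary circles and is supported near an arc joining them. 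Again $\phi$ swaps the distinct curves $c_1,c_2\in\mathcal S$, so $\MCG(\Sigma)$ is not extremely amenable, completing the proof.

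The main obstacle is not the surface topology --- the configurations above are routine, and the curve-interchanging homeomorphisms are built by hand --- but the topological-group input: one really needs Proposition~\ref{prop_non-arch}, both to know $\MCG(\Sigma)$ is Polish and to know that curve-stabilisers are open, for otherwise the action on $\mathrm{LO}(\mathcal S)$ need not be continuous. A secondary point of care, in the finite-genus case, is that the interchanging homeomorphism must be allowed to move the separating curves $c_1,c_2$ rather than be the identity off $R_1\cup R_2$, and must be checked to be orientation-preserving across the gluings; both are standard once the decomposition is set up correctly.
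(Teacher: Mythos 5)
Your proposal is correct, and its engine is the same as the paper's: extreme amenability of a non-archimedean Polish group is incompatible with a group element swapping two points of a countable set on which the group acts with open stabilisers (you rederive this via the flow on $\mathrm{LO}(\mathcal S)$; the paper quotes it as Lemma~\ref{lem_stab}, whose proof in \cite{kechris2005fraisse} is exactly your linear-order argument), and one then exhibits a mapping class interchanging two isotopy classes of curves. Where you genuinely diverge is in the case analysis and the supporting inputs. The paper splits by genus: for genus $0$ it invokes the Mann--Rafi ordering on ends to produce two comparable ends, hence two swappable separating curves, and then needs the Patel--Vlamis residual finiteness theorem to rule out the remaining genus-$0$ surfaces with pairwise non-comparable ends; for positive genus it lifts a torsion element of $\mathrm{SL}_2(\mathbb Z)$ through an essential one-holed torus. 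You instead dispose of \emph{all} finite-type surfaces at once by discreteness plus Veech's theorem, and split the infinite-type case by whether the genus is infinite (handle swap in a one-holed genus-two subsurface) or finite (a Cantor--Bendixson argument producing two disjoint homeomorphic clopen end sets of size $\ge 2$, bounded by planar pieces that can be interchanged). This buys you independence from both the Mann--Rafi machinery and residual finiteness, at the cost of having to verify the small point-set claim about $\End(\Sigma)$ (which does hold: an infinite compact zero-dimensional metrisable space either has infinitely many isolated points or a clopen Cantor set, either of which splits as required) and the usual care that the interchanging homeomorphism of $\Sigma\setminus(R_1\cup R_2)$ exists --- your ``supported near an arc'' construction is exactly the pair-of-pants symmetry used in the paper's Lemma~\ref{lem_non_ea_1}. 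Both routes are sound; yours is somewhat more self-contained, the paper's positive-genus argument is somewhat more uniform (it works verbatim for any surface of genus $\ge 1$ and complexity $\ge 2$ without distinguishing finite from infinite genus, and sets up the finite-orbit criterion reused in Theorem~\ref{thm2}).
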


Denote by $\End(\Sigma)$ the end space of a surface $\Sigma$. It is a compact space with a natural continuous $\MCG(\Sigma)$-action. In many cases, this action is fixed-point free, which will witness the non extreme amenability of the mapping class group. But this is not always the case. For example, the \emph{Loch Ness monster} surface has infinite genus but only one end and the action of its mapping class group on the end space is trivial. Other non trivial examples are (non self-similar) surfaces with a unique maximal end \cite{mann2022results}.

The proof of Theorem \ref{thm1} mainly relies on the description of extremely amenable non-archimedean Polish groups provided in \cite{kechris2005fraisse} and can be divided into the discussion of two cases, depending on whether the surface is zero-genus or not. When the surface $\Sigma$ has zero genus, the Mann-Rafi ordering \cite{mann2020large} yields two disjoint curves of the same topological type unless $\Sigma$ is a sphere or a once-punctured sphere. In this case, the existence of these two curves further indicates that $\MCG(\Sigma)$ is not extremely amenable. If $g(\Sigma)>0$, then one may also find another pair of curves with the same topological type, not necessarily disjoint, of which the existence also implies the non extreme amenability of $\MCG(\Sigma)$.

Moreover, the proof of Theorem \ref{thm1} remains valid for closed subgroup of $\MCG(\Sigma)$ containing a non-trivial mapping class with finite orbit, including pure mapping class groups $\PMCG(\Sigma)$, \emph{i.e.} the subgroup in $\MCG(\Sigma)$ that fixes pointwise every ends, and the closure of compactly supported mapping class group $\MCG_c(\Sigma)$. 
\begin{thm}\label{thm2}
Let $\Sigma$ be an orientable surface of finite or infinite type. If $G<\MCG(\Sigma)$ is a subgroup containing a mapping class $\phi\in G$ such that for some simple closed curve $c$ on $\Sigma$, the orbit $\{\phi^n(c):n\in\mathbb{Z}\}$ is finite, then $G$ is not extremely amenable. In particular, the groups $\PMCG(\Sigma)$ and $\MCG_c(\Sigma)$ of a surface $\Sigma$ with $g(\Sigma)\geq 1$ are not extremely amenable.
\end{thm}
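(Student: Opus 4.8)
The plan is to apply the Kechris--Pestov--Todor\v{c}evi\'c correspondence through the following standard consequence of extreme amenability. If $G$ is extremely amenable and acts continuously on a countable set $X$, then the space $\mathrm{LO}(X)\subseteq 2^{X\times X}$ of all linear orders on $X$ --- compact, and non-empty because $X$ is countable --- is a continuous $G$-flow, so $G$ fixes a point of it; that is, $G$ preserves some linear order $\prec$ on $X$. In particular, whenever $g\in G$ stabilises a \emph{finite} set $A\subseteq X$ setwise, $g|_{A}$ is an order-automorphism of the finite linear order $(A,\prec|_{A})$, hence the identity. (Equivalently, in the language recalled above: if $G$ is extremely amenable then the age of the associated ultrahomogeneous relational structure is rigid.) Since $\MCG(\Sigma)$, and therefore $G$, acts continuously on the countable set $\mathcal{C}$ of isotopy classes of simple closed curves --- this is part of Proposition~\ref{prop_non-arch} --- as well as on the countable set $\vec{\mathcal{C}}$ of isotopy classes of \emph{oriented} simple closed curves, it suffices to exhibit some $h\in G$ that permutes a finite subset of $\mathcal{C}$, or of $\vec{\mathcal{C}}$, non-trivially.

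The principal case is when the given orbit $O=\{\phi^{n}(c):n\in\mathbb{Z}\}$ has at least two elements. Then $O$ is a finite $\phi$-invariant subset of $\mathcal{C}$ on which $\phi$ acts as a cycle of length $|O|\geq 2$, hence non-trivially, which contradicts extreme amenability of $G$ by the previous paragraph. This already settles the ``in particular'' assertion: if $g(\Sigma)\geq 1$ we fix an embedded one-holed torus $T\subseteq\Sigma$ and let $\phi\in\MCG(\Sigma)$ be a lift, through the central extension $\MCG(T,\partial T)\twoheadrightarrow\MCG(T)\cong SL_{2}(\mathbb{Z})$, of the order-four matrix $\left(\begin{smallmatrix}0&-1\\1&0\end{smallmatrix}\right)$. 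Then $\phi$ is supported in $T$, so it belongs to $\MCG_{c}(\Sigma)$ and to its closure, and to $\PMCG(\Sigma)$ (a compactly supported mapping class is the identity near every end, hence fixes every end); and $\phi$ interchanges the two curves $a,b\subseteq T$ with $i(a,b)=1$, so $\{\phi^{n}(a):n\in\mathbb{Z}\}=\{a,b\}$ has size two. Hence $\MCG(\Sigma)$, $\PMCG(\Sigma)$, and $\MCG_{c}(\Sigma)$ (as well as its closure) fail to be extremely amenable whenever $g(\Sigma)\geq 1$.

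There remains the case in which the hypothesis holds only for a curve $c$ with $\phi([c])=[c]$ while $\phi\neq\Id$, and this is where I expect the real work to be. The first reduction is to orientations: choosing a homeomorphism representing $\phi$ and fixing $c$, and recalling that $\MCG(\Sigma)$ consists of orientation-preserving classes, $\phi$ either reverses the orientation of $c$ or preserves it; in the orientation-reversing case $\phi$ swaps $\vec{c}$ and $-\vec{c}$ in $\vec{\mathcal{C}}$, a non-trivial transposition, and we conclude as before. The main obstacle is the residual possibility that $\phi$ acts on both $\mathcal{C}$ and $\vec{\mathcal{C}}$ with every finite orbit a singleton --- which happens, for instance, when $\phi$ is a non-trivial multitwist, or a pseudo-Anosov map supported on a proper subsurface disjoint from $c$. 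For such $\phi$ the curve actions carry no information, and one must instead pass to a finer countable discrete structure on which $G$ still acts continuously and on which some power of $\phi$ becomes a non-trivial transformation of \emph{finite} order, hence acquires a finite orbit of size at least two. The natural candidates are $H_{1}(\Sigma;\mathbb{Z}/p\mathbb{Z})$ when $\phi$ acts non-trivially on integral homology (a transvection, and more generally a multitwist, has finite order modulo $p$), and, in general, the mod-$p$ homology of a $G$-invariant finite cover of $\Sigma$; continuity of these actions for the Polish topology holds because any such homology class is supported over a compact subsurface and is therefore fixed by a neighbourhood of the identity. Verifying that some such module always detects a non-trivial $\phi$ of the residual type, and checking these continuity statements with care, is the technical heart of this last case; once it is in place, the linear-order argument of the first paragraph applies verbatim and finishes the proof.
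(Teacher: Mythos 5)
Your first two paragraphs are essentially the paper's proof. The key tool is identical: the paper's Lemma~\ref{lem_stab} (setwise $=$ pointwise stabilisers of finite sets for extremely amenable non-archimedean groups) is exactly the linear-order argument you give, and your treatment of the ``in particular'' clause --- an embedded one-holed torus, a lift of $\bigl(\begin{smallmatrix}0&-1\\1&0\end{smallmatrix}\bigr)$ through the central extension, and the resulting two-element orbit $\{a,b\}$ --- is the same construction the paper uses with $S_1^1$ and the inclusion homomorphism. One genuine (minor) difference: because Lemma~\ref{lem_stab} is stated for Polish groups and $G$ need not be closed, the paper passes to $\overline{G}$ and then invokes Lemma~\ref{lem_dense} to come back to $G$; your linear-order argument applies to any topological group acting continuously on a countable discrete set, so it handles $G$ directly and you do not need the density lemma. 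That is a small simplification.

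Your third paragraph, however, is not a proof of anything and should be cut rather than completed. You have correctly noticed that when the orbit $\{\phi^n(c)\}$ is a singleton the stabiliser argument says nothing --- indeed, read literally the statement is then false ($G$ trivial, $\phi=\Id$, any $c$ satisfies the hypothesis, yet the trivial group is extremely amenable), and even for $\phi$ a Dehn twist about $c$ the hypothesis carries no usable information. The paper does not treat this case either: its proof asserts $\phi\in\overline{G}_F\setminus\overline{G}_{(F)}$, which presupposes $\phi(c)\neq c$, so the hypothesis is implicitly ``the orbit is finite with at least two elements.'' The correct resolution is to say exactly that and stop. The programme you sketch instead (mod-$p$ homology of $\Sigma$ and of invariant finite covers) is speculative: you do not verify continuity of these actions for the non-archimedean topology (stabilising finitely many unoriented isotopy classes does not obviously stabilise a homology class, even up to sign), and you concede yourself that you have not shown such a module always detects the residual $\phi$. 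As written, that paragraph is a gap in your argument for a case that the theorem, properly read, does not contain.
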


Note that the non extreme amenability of $\PMCG(\Sigma)$ can also be shown in a way that one constructs a $\PMCG(\Sigma)$-action on the circle $S^1$ without fixed-point from the homomorphisms built in \cite{aramayona2020first}. But this method cannot be adapted to $\MCG_c(\Sigma)$.
\section{Non extreme amenability}
Recall that the \emph{extended mapping class group} $\MCG(\Sigma)^\pm$ is defined similarly as $MCG(\Sigma)$ but using the group of all automorphisms $\mathrm{Homeo}(\Sigma)$ in the stead of the orientation preserving one. The result below is already known following several basic facts about mapping class groups:

\begin{prop}\label{prop_non-arch}
If the surface $\Sigma$ is of complexity at least two, then the mapping class group $\MCG(\Sigma)$ and the extended mapping class group $\MCG^\pm(\Sigma)$ are non-archimedean Polish groups.
\end{prop}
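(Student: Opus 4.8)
The plan is to verify directly the two defining properties of a non-archimedean Polish group --- Polishness, and the existence at the identity of a neighbourhood basis consisting of open subgroups --- from which the realisation as a closed subgroup of $S_\infty$ follows formally. The finite-type case is classical: when $c(\Sigma)<\infty$ the groups $\MCG(\Sigma)$ and $\MCG^{\pm}(\Sigma)$ are countable discrete groups, so there is nothing to prove; thus assume $\Sigma$ is of infinite type. I would also record the two soft facts that carry the formal part of the argument: the group $\mathrm{Homeo}^{+}(\Sigma)$ in the compact-open topology is a Polish group (as is standard for homeomorphism groups of second-countable manifolds), and the quotient of a Polish group by a closed normal subgroup is again Polish, with open quotient map. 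It therefore suffices to show that $\mathrm{Homeo}_{0}(\Sigma)=\ker\bigl(q\colon\mathrm{Homeo}^{+}(\Sigma)\to\MCG(\Sigma)\bigr)$ is closed, and that $\MCG(\Sigma)$ has a neighbourhood basis of open subgroups at $\mathrm{id}$.

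For closedness I would use the countable set $\mathcal C$ of isotopy classes of essential simple closed curves on $\Sigma$; it is nonempty because $c(\Sigma)\ge 2$, and countable because essential simple closed curves inject, up to isotopy, into the conjugacy classes of the countable group $\pi_{1}(\Sigma)$. For each $\gamma\in\mathcal C$ the set $\mathrm{St}(\gamma)=\{h\in\mathrm{Homeo}^{+}(\Sigma):h(\gamma)\simeq\gamma\}$ is clopen, since being isotopic to a fixed essential simple closed curve is a $C^{0}$-open condition on embedded circles and these conditions partition $\mathrm{Homeo}^{+}(\Sigma)$. Because $\Sigma$ is of infinite type, the Alexander method for infinite-type surfaces (due to Hern\'andez Hern\'andez, Morales and Valdez, extending the finite-type version of Farb and Margalit) shows that the $\MCG(\Sigma)$-action on $\mathcal C$ is faithful, so $\mathrm{Homeo}_{0}(\Sigma)=\bigcap_{\gamma\in\mathcal C}\mathrm{St}(\gamma)$ is an intersection of clopen sets, hence closed, and $\MCG(\Sigma)$ is Polish. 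Since $\mathrm{Homeo}^{+}(\Sigma)$ is clopen in $\mathrm{Homeo}(\Sigma)$ (the orientation character is continuous), $\mathrm{Homeo}_{0}(\Sigma)$ is closed in $\mathrm{Homeo}(\Sigma)$ as well, so $\MCG^{\pm}(\Sigma)$ is Polish, and $\MCG(\Sigma)$ is a clopen subgroup of index two in it; hence it is enough to finish the argument for $\MCG(\Sigma)$ (a topological group with an open finite-index non-archimedean Polish subgroup is itself non-archimedean Polish).

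For the basis of open subgroups, to a finite $A\subseteq\mathcal C$ I would attach $G_{A}=\{[h]\in\MCG(\Sigma):h(\gamma)\simeq\gamma\text{ for all }\gamma\in A\}$; since $q^{-1}(G_{A})=\bigcap_{\gamma\in A}\mathrm{St}(\gamma)$ is clopen, $G_{A}$ is a clopen subgroup. It remains to check that the $G_{A}$ are cofinal below every neighbourhood of $\mathrm{id}$. As $q$ is open, the images $q(W)$ of the basic compact-open neighbourhoods $W=\{h:h(K_{i})\subseteq V_{i}\text{ for }i=1,\dots,m\}$ of $\mathrm{id}$ (with the $K_{i}$ compact, the $V_{i}$ open, $K_{i}\subseteq V_{i}$) form a neighbourhood basis at $\mathrm{id}$ in $\MCG(\Sigma)$, so it suffices, given such a $W$, to produce a finite $A$ with $G_{A}\subseteq q(W)$. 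Setting $K=\bigcup_{i}K_{i}$, I would choose a compact essential subsurface $S$ containing $K$ in its interior and large enough to carry a filling collection $A$ of simple closed curves satisfying the hypotheses of the Alexander method; that method then furnishes, for any $[h]\in G_{A}$, a representative $h'$ of $[h]$ restricting to the identity on $K$, so that $h'(K_{i})=K_{i}\subseteq V_{i}$ for each $i$, i.e. $h'\in W$ and $[h]=[h']\in q(W)$. Hence $G_{A}\subseteq q(W)$, and $\MCG(\Sigma)$ --- and with it $\MCG^{\pm}(\Sigma)$ --- is non-archimedean Polish.

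The one genuinely non-formal step is the last one: converting the $C^{0}$-condition defining a compact-open neighbourhood into finitely many curve-theoretic constraints. This is precisely where the change-of-coordinates principle and the Alexander method are indispensable, and it calls for choosing $S$ and the filling collection $A$ with some care, so that the residual homeomorphism --- supported near $\partial S$ --- can be isotoped off $K$. Everything else is soft; alternatively one could bypass most of this by invoking the known fact that $\MCG(\Sigma)$ is a Polish group sitting inside $S_\infty$ as a closed subgroup via its action on curves.
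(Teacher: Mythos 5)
Your proof is correct, but it follows a genuinely different route from the paper's. The paper disposes of the proposition in a few lines by citing the curve-graph rigidity theorems of Ivanov, Luo, Hern\'andez Hern\'andez et al.\ and Bavard et al., which give an isomorphism of \emph{topological} groups $\MCG^\pm(\Sigma)\simeq\mathrm{Aut}(\mathcal{C}(\Sigma))$; since the automorphism group of a countable relational structure is a closed subgroup of $S_\infty$, the conclusion is immediate, and $\MCG(\Sigma)$ is then observed to be closed in $\MCG^\pm(\Sigma)$. You instead verify the definition directly upstairs in $\mathrm{Homeo}^+(\Sigma)$: closedness of the isotopy-trivial kernel via clopen curve-stabilisers, and a neighbourhood basis of open subgroups via the stabilisers $G_A$ of finite curve systems. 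What your route buys is economy of input: you never need the (deep) surjectivity of $\MCG^\pm(\Sigma)\to\mathrm{Aut}(\mathcal{C}(\Sigma))$, only faithfulness of the action on curves and the coincidence of the quotient topology with the permutation topology --- and your cofinality argument ($G_A\subseteq q(W)$ via the Alexander method on a large compact subsurface) is precisely the content of the ``topological'' half of the isomorphism that the paper outsources to the literature, so the two proofs overlap in substance at exactly that point. Your quarantining of the finite-type case as countable discrete is also the right move, since it sidesteps the failure of faithfulness on low-genus closed surfaces (hyperelliptic involutions) without needing the complexity hypothesis there. The cost is length and the need to be careful, as you acknowledge, in choosing the filling system $A$ so that a mapping class fixing every class in $A$ admits a representative that is literally the identity on the prescribed compact set; that step is standard but not free, and in a written version it should carry a precise citation (e.g.\ to the infinite-type Alexander method) rather than the phrase ``with some care.''
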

\begin{proof}
Let $\mathcal{C}(\Sigma)$ be the curve graph of the surface $\Sigma$, where vertices are isotopy classes of essential simple closed curves and an edge is attached to two vertices if the two classes have disjoint representatives. It is well-known that for a surface $\Sigma$ with complexity at least two, of finite or infinite type, we have the isomorphism $\MCG^\pm(\Sigma)\simeq\mathrm{Aut}(\mathcal{C}(\Sigma))$ between topological groups \cite{ivanov1997automorphisms,luo2000automorphisms,hernandez2018isomorphisms,bavard2020isomorphisms}. Note that $\mathcal{C}(\Sigma)$ has countably many vertices. The graph $\mathcal{C}(\Sigma)$ is actually a countable relational first-order structure. Such an automorphism group is closed subgroup of $S_\infty$ and thus a non-archimedean Polish group (see for example \cite[Part I, \S9.B(7)]{kechris2012classical}). Hence $\MCG^\pm(\Sigma)$ is a non-archimedean Polish group. To show that $\MCG(\Sigma)$ is also one, it suffices to demonstrate that $\MCG(\Sigma)$ is closed in $\MCG^\pm(\Sigma)$. Indeed, given a convergent sequence of homeomorphisms $\Sigma\to\Sigma$ in $\mathrm{Homeo}(\Sigma)$, if they are all orientation preserving, then so will be their limit.
\end{proof}

To further discuss the extreme amenability of a non-archimedean Polish group, we need to introduce the following notions. Let $G$ be a group acting on a space $X$ and $Y$ be a subspace of $X$. We call the subgroup $G_{(Y)}=\{g\in G: gy=y,\ \forall y\in Y\}$ of $G$ the \textbf{pointwise stabiliser} of $Y$. Similarly, the subgroup $G_Y=\{g\in G: gY=Y\}$ is called the \textbf{setwise stabiliser} of $Y$ in $G$. 

The following observation is a natural consequence after KPT correspondence (see \cite[Proposition 4.3]{kechris2005fraisse}), which can find its root in \cite{glasner2002minimal}.
\begin{lem}\label{lem_stab}
If $G$ is an extremely amenable non-archimedean Polish group acting continuously on the discrete space $\mathbb{N}$, then $G_{(F)}=G_F$ for any finite subset $F\subset \mathbb{N}$.
\end{lem}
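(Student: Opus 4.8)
The plan is to derive the lemma from the familiar consequence of extreme amenability that $G$ admits an \emph{invariant linear ordering} of its countable underlying set; once such an ordering is available the conclusion is immediate, because a finite linearly ordered set carries no non-trivial order-automorphism. So the real work is to produce the invariant order, which I would do by exhibiting an appropriate compact $G$-flow and applying the hypothesis.

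Concretely, I would first consider the set $\mathrm{LO}(\mathbb{N})$ of all linear orderings of $\mathbb{N}$, viewed inside $2^{\mathbb{N}\times\mathbb{N}}$ with the product topology. Irreflexivity, transitivity and totality are each closed conditions depending on only finitely many coordinates, so $\mathrm{LO}(\mathbb{N})$ is closed in $2^{\mathbb{N}\times\mathbb{N}}$, hence compact, and it is non-empty since the usual order on $\mathbb{N}$ lies in it. The group $G$ acts on $\mathrm{LO}(\mathbb{N})$ by letting $a\mathbin{(g\cdot{<})}b\iff g^{-1}a< g^{-1}b$, and I would check this action is continuous: since the $G$-action on the discrete set $\mathbb{N}$ is continuous, each set $\{g\in G:g^{-1}a=c\}$ is clopen, so the preimage of the clopen set $\{{<}\in\mathrm{LO}(\mathbb{N}):a< b\}$ under the action map equals the open set $\bigcup_{c,d\in\mathbb{N}}\{g:g^{-1}a=c,\ g^{-1}b=d\}\times\{{<}\in\mathrm{LO}(\mathbb{N}):c< d\}$. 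This is precisely the point where non-archimedeanness of $G$ is used.

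Next, extreme amenability forces the flow $\mathrm{LO}(\mathbb{N})$ to have a $G$-fixed point, that is, a linear order $\prec$ on $\mathbb{N}$ for which every $g\in G$ is an automorphism of $(\mathbb{N},\prec)$. Then for a finite $F\subset\mathbb{N}$ and any $\sigma\in G_F$, the element $\sigma$ preserves $F$ setwise and preserves $\prec$, so $\sigma|_F$ is an order-automorphism of the finite linear order $(F,\prec|_F)$; such a map must be the identity (the $\prec$-least element of $F$ is fixed, and one finishes by induction on $|F|$). Hence $\sigma\in G_{(F)}$, giving $G_F\subseteq G_{(F)}$; the reverse inclusion is trivial, so $G_F=G_{(F)}$.

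I do not expect a genuine obstacle here: the argument is routine once the flow $\mathrm{LO}(\mathbb{N})$ is identified, and the only steps requiring any care — compactness of $\mathrm{LO}(\mathbb{N})$ and continuity of the induced $G$-action — rest solely on the continuity of the $G$-action on the discrete set $\mathbb{N}$. (Alternatively one could simply cite \cite[Proposition~4.3]{kechris2005fraisse} directly, but the self-contained route through $\mathrm{LO}(\mathbb{N})$ seems more transparent.)
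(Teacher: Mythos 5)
Your proof is correct: the paper gives no argument for this lemma, merely citing \cite[Proposition 4.3]{kechris2005fraisse}, and your self-contained route through the compact flow $\mathrm{LO}(\mathbb{N})$ of linear orderings (invariant order from extreme amenability, then rigidity of finite linear orders) is exactly the standard argument underlying that citation. One small correction: the continuity of the induced action on $\mathrm{LO}(\mathbb{N})$ uses only that the sets $\{g\in G: g^{-1}a=c\}$ are open, which follows from the stated hypothesis that $G$ acts continuously on the discrete set $\mathbb{N}$, rather than from non-archimedeanness per se.
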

\begin{proof}[Sketch of proof]
The continuous action $G$ on $X$ induces a continuous $G$-action on the compact space of linear orders on $X$. Since $G$ is extremely amenable, then we can find a $G$-invariant linear order on $X$. But for any finite set $F\subset X$, there is no non-trivial linear order preserving action. 
\end{proof}

The end space $\End(\Sigma)$ is a totally disconnected, separable, metrisable topological space (and thus a closed subset of Cantor set). Among the ends, there are \emph{non planary ends}, of which the collection is denoted $\End_\infty(\Sigma)$, in the sense that every neighbourhood in $\Sigma$ of a non planary end has at least genus $1$. The non planary ends $\End_\infty(\Sigma)$ form a closed subset of $\End(\Sigma)$.

\begin{rem}
There is a slight \emph{abus de langage} here. The neighbourhood mentioned above resides in the end compactification of $\Sigma$ instead of $\End(\Sigma)$, but its intersection with $\Sigma$ becomes a subsurface in $\Sigma$ and can still be regarded as a ``neighbourhood'' of an end.
\end{rem}

Let $D,D'$ be two subsets of $\End(\Sigma)$. We say that $D$ and $D'$ are \emph{isomorphic} if $D$ is homeomorphic to $D'$ and $D\cap \End_\infty(\Sigma)$ is homeomorphic $D'\cap \End_\infty(\Sigma)$ simultaneously.

Mann and Rafi give a way to order the ends of a surface by their similarity \cite{mann2020large}. Let $x,y\in \End(\Sigma)$ be two ends. Then we write $x\preccurlyeq y$ if every clopen neighbourhood of $y$ contains a clopen subset that is isomorphic to a neighbourhood of $x$. Two ends are said \emph{equivalent} if both $x\preccurlyeq y$ and $y\preccurlyeq x$. They are said \emph{non-comparable} if neither case happens.

It is worth noticing that each compact neighbourhood of an end of $\Sigma$ corresponds to one of its clopen neighbourhoods $D$ in $\End(\Sigma)$ and this compact neighbourhood can be chosen to be the end compactification of a subsurface in $\Sigma$ whose ends are the union of $D$ with an additional isolated point.

Since the group $\mathrm{Homeo}^+(\Sigma)$ acts on the surface $\Sigma$ continuously, this action has a natural continuous extension onto the end compactification of $\Sigma$. By taking the quotient, it is not hard to see that $\MCG(\Sigma)$ acts continuously by homoemorphisms on $\End(\Sigma)$ and $\End_\infty(\Sigma)$ is an invariant subspace of this group action. Moreover, from the definition, this action preserves the ordering given above, namely $gx\preccurlyeq gy$ for any $g\in\MCG(\Sigma)$ if and only if $x\preccurlyeq y$. In particular, if there exists $g\in\MCG(\Sigma)$ such that $gx=y$, then necessarily $x\preccurlyeq y$.

Adopting the terminology from \cite{farb2011primer}, the \textbf{topological type} of a simple closed curve $c$ on $\Sigma$ is $\Sigma\setminus c$. Two curves $c$ and $c'$ on $\Sigma$ are said to \emph{have the same topological type} if there is a homeomorphism between $\Sigma\setminus c$ and $\Sigma \setminus c'$.

\begin{rem}
Since no distinction will be needed here, in the sequel, a curve on the surface can mean either a topological embedding of $S^1$ or its isotopy class, depending on the context. 
\end{rem}

An argument of the renowned \emph{Alexander method} (see for example \cite[\S 2.3]{farb2011primer} and \cite{hernandez2019alexander}) is that given two distinct simple closed curves (up to isotopy) of the same topological type, one can always find a non-trivial mapping class sending one curve to the other. Moreover, this mapping class $g$ can be chosen to have finite order if the surface $\Sigma$ is of finite type. This is not true in general for infinite-type surfaces, but only some special cases will be needed for our purpose here and similar arguments yield the same result for these special cases:
\begin{lem}\label{lem_non_ea_1}
Let $\Sigma$ be a surface with genus $0$. Suppose that there exists two disjoint non-isotopic simple closed curves $c,c'$ on $\Sigma$ with the same topological type. Assume that $c$ cuts $\End(\Sigma)=E\sqcup N$ and $c'$ cuts $\End(\Sigma)=E'\sqcup N'$ in a way that $E\simeq E'$, $E\cap E'=\emptyset$ and $\End(\Sigma)\setminus(E\cup E')\neq \emptyset$. Then there exists a non-trivial mapping class $\phi\in \MCG(\Sigma)$ such that $\phi(c)=c'$ and $\phi^2=\Id$.
\end{lem}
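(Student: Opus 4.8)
The plan is to build the involution $\phi$ directly as a homeomorphism of $\Sigma$ that swaps the two sides of the configuration determined by $c$ and $c'$. Since $\Sigma$ has genus $0$, cutting along a simple closed curve produces two planar pieces, and the topological type of a curve in a genus-$0$ surface is completely recorded by how it partitions the end space. So the first step is to fix the combinatorial picture: $c$ separates $\Sigma$ into two subsurfaces $S_E$ (carrying the ends $E$, together with one boundary-induced isolated end) and $S_N$ (carrying $N$); similarly $c'$ separates $\Sigma$ into $S_{E'}$ and $S_{N'}$. The hypotheses $E \simeq E'$, $E \cap E' = \varnothing$, and disjointness of $c,c'$ are exactly what is needed to ensure that $S_E$ and $S_{E'}$ are disjoint subsurfaces that are homeomorphic to one another, and that $c' \subset S_N$, $c \subset S_{N'}$.

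\textbf{Construction of $\phi$.} The key step is to produce a homeomorphism $h\colon S_E \to S_{E'}$ that restricts to the identity on the common boundary data in a controlled way, then to verify that $h$ and $h^{-1}$ glue together across the complementary region to a globally defined involution. Concretely: the region $R := \Sigma \setminus (\mathrm{int}\,S_E \cup \mathrm{int}\,S_{E'})$ is a planar subsurface with two boundary circles ($c$ and $c'$) and end set $N \cap N' = \End(\Sigma) \setminus (E \sqcup E')$; because $c$ and $c'$ have the same topological type and $E \simeq E'$, this region is symmetric — it admits a homeomorphism swapping its two boundary components and fixing its ends, which one checks using Richards' classification applied to the planar surface $R$ (genus $0$, two boundary circles, end set invariant under the swap since it is literally the same set). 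Call this symmetry $r\colon R \to R$. Now define $\phi$ by: $\phi|_{S_E} = h$, $\phi|_{S_{E'}} = h^{-1}$, and $\phi|_R = r$, after first choosing $h$ so that on $c$ it agrees with $r|_c$ (a boundary matching) and on $\partial S_{E'}$ the map $h^{-1}$ agrees with $r|_{c'}$. Choosing these boundary identifications compatibly makes $\phi$ a well-defined homeomorphism; it is orientation-preserving after possibly composing the building blocks with reflections so that the orientations match across $c$ and $c'$. By construction $\phi(c) = \partial S_{E'}$-side $= c'$, $\phi(c') = c$, and $\phi^2 = \Id$ since it squares to the identity on each of the three pieces. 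Finally, $\phi$ is non-trivial because it moves $c$ to $c'$ and, by hypothesis, $c \ne c'$ (they are distinct, being disjoint and of the same type in a surface where that type is realized more than once).

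\textbf{Main obstacle.} The delicate point is the matching of boundary identifications: one must choose $h\colon S_E \to S_{E'}$ and the symmetry $r\colon R \to R$ so that the three partial homeomorphisms agree on the circles $c$ and $c'$ where the pieces meet, \emph{and} so that the resulting global map is an orientation-preserving involution rather than merely a homeomorphism. Getting $\phi^2 = \Id$ forces the identification on $S_{E'}$ to be exactly $h^{-1}$ (not just some homeomorphism $S_{E'} \to S_E$), which in turn pins down how $h$ must behave on $\partial S_E$ relative to how $r$ behaves on $c'$; circularity here is resolved by first fixing $r$ on $R$, reading off its prescribed values on $c$ and $c'$, and then invoking the change-of-coordinates principle to extend those prescribed boundary values to a homeomorphism $h$ of the interiors — this is possible precisely because $S_E \cong S_{E'}$ as surfaces with one boundary circle and isomorphic end spaces, which is the content of $E \simeq E'$ plus genus $0$. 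I expect the orientation bookkeeping to be the only genuinely fiddly part; everything else is a routine application of Richards' classification and the Alexander-method philosophy already invoked in the paragraph preceding the lemma.
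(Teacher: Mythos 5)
Your construction is essentially identical to the paper's: cut $\Sigma$ along $c$ and $c'$ into the two end-carrying pieces and a middle region, use a homeomorphism $h$ and its inverse on the outer pieces (furnished by Richards' classification from $E\simeq E'$), and a symmetry of the middle region swapping the two boundary circles. The only cosmetic difference is that the paper realizes that middle symmetry as an involution of a finite-type pair-of-pants $S_{0,3}$ containing both boundary circles, extended by the identity, which is exactly the change-of-coordinates device you invoke to resolve the boundary-matching issue.
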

\begin{proof}
Figure \ref{fig:mickey} depicts how such a surface $\Sigma$ looks like. The simplest case is when $E$ and $E'$ reduces to singleton and $\Sigma$ is a third-punctured sphere $S_{0,3}$. Now the desired $\phi\in\MCG(\Sigma)$ is just the symmetry sending $E$ to $E'$. More precisely, cut $\Sigma$ into three parts $S\sqcup S' \sqcup \big(\Sigma\setminus (S\cup S')\big)$ along $c$ and $c'$ so that $\partial S=c$ and $\partial S'=c'$. By Richards' theorem, there is a homeomorphism $\varphi\colon S\to S'$ sending $\partial S$ to $\partial S'$. Take a symmetry $\sigma\in\MCG(\Sigma\setminus(S\cup S'))$ that interchanges the position of $c$ and $c'$: $\sigma$ is the continuous extension by identity of the symmetry $\overline{\sigma}\in\MCG(S_{0,3})$, where $S_{0,3}\subset \Sigma\setminus(S\cup S')$ is a subsurface of finite type with two of its ends being $\partial S=c$ and $\partial S'=c'$. Now $\phi$ is piecewise defined by $\sigma$ on $\Sigma\setminus(S\cup S')$, $\varphi$ on $S$ and $\varphi^{-1}$ on $S'$ (up to isotopy).
\begin{figure}[htbp]
    \centering
    \includegraphics[scale=0.4]{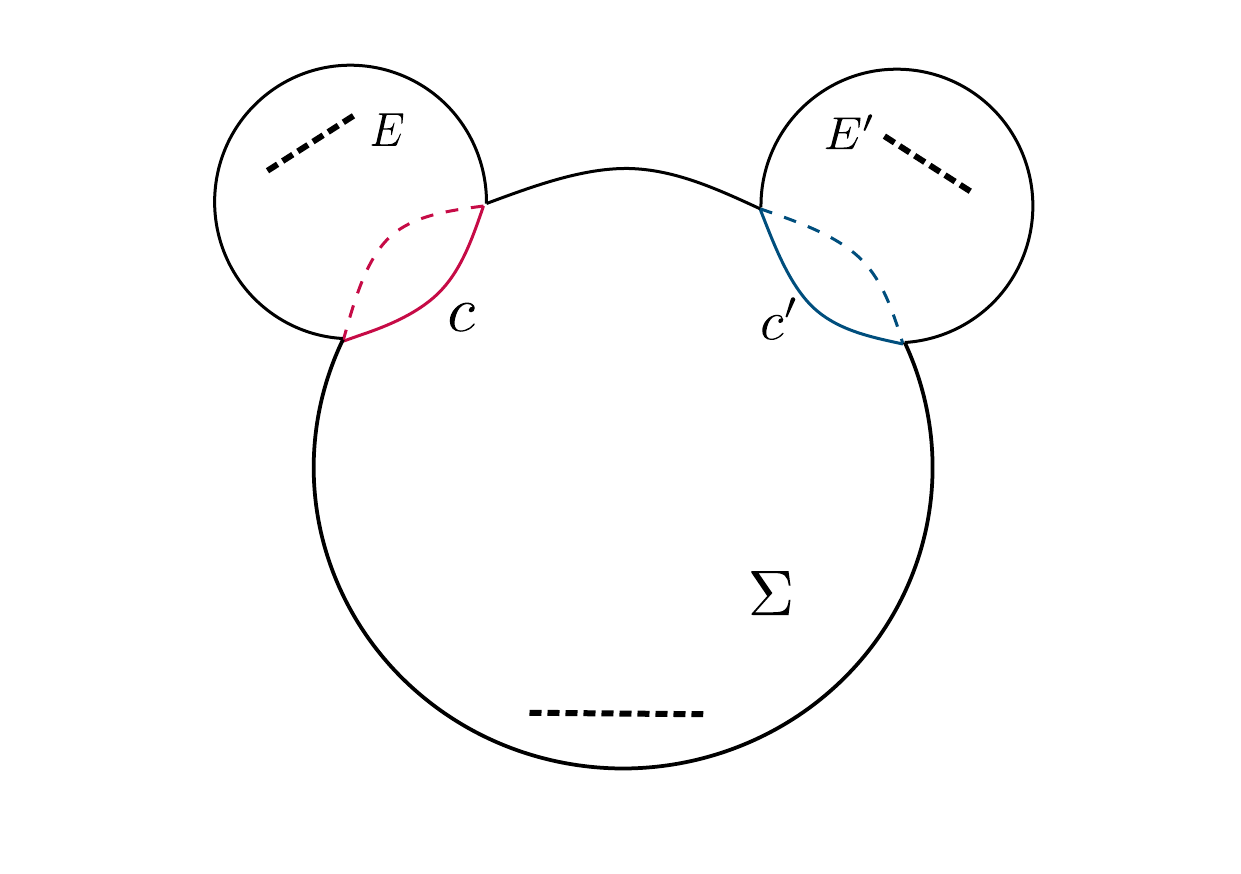}
    \caption{Surface of Lemma \ref{lem_non_ea_1}}
    \label{fig:mickey}
\end{figure}
\end{proof}

Let us first deal with the (non) extreme amenability of $\MCG(\Sigma)$ for surfaces $\Sigma$ with genus $0$ by using the results above.
\begin{lem}\label{lem_non_comparable}
Let $\Sigma$ be an orientable surface with genus $0$ and complexity at least $2$. If $\MCG(\Sigma)$ is extremely amenable, then any two distinct ends in $\End(\Sigma)$ are non-comparable.
\end{lem}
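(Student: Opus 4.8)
The plan is to establish the contrapositive. Suppose there are two distinct comparable ends $x,y\in\End(\Sigma)$; after relabelling assume $x\preccurlyeq y$. I will produce two disjoint essential simple closed curves $c,c'$ of the same topological type, cutting $\End(\Sigma)=E\sqcup N$ and $\End(\Sigma)=E'\sqcup N'$ respectively, with $E,E'$ disjoint and $E\simeq E'$; since $g(\Sigma)=0$ there are no non-planary ends, so ``$\simeq$'' here means simply ``homeomorphic''. Lemma~\ref{lem_non_ea_1} then supplies a non-trivial $\phi\in\MCG(\Sigma)$ with $\phi(c)=c'$ and $\phi^2=\Id$, hence also $\phi(c')=c$. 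Thus $\phi$ maps the finite set of vertices $F=\{c,c'\}$ of the curve graph $\mathcal{C}(\Sigma)$ (note $c\neq c'$, as they bound different end sets) to itself while not fixing it pointwise, so $\MCG(\Sigma)_F\neq\MCG(\Sigma)_{(F)}$. Since $\MCG(\Sigma)$ is a non-archimedean Polish group acting continuously on the discrete countable set $V(\mathcal{C}(\Sigma))$ (Proposition~\ref{prop_non-arch}, using $c(\Sigma)\geq 2$), Lemma~\ref{lem_stab} shows $\MCG(\Sigma)$ is not extremely amenable, as desired.

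The real work is the construction of $E$ and $E'$. First separate $x$ from $y$ by a clopen neighbourhood $V\ni y$ with $x\notin V$; by definition of $\preccurlyeq$ there is a clopen $E'\subseteq V$ and a homeomorphism $h\colon E_1\to E'$ from some clopen neighbourhood $E_1$ of $x$ (respecting non-planary ends, of which there are none). I then split into two cases. If $x$ is not isolated in $\End(\Sigma)$, choose a clopen $U\ni x$ disjoint from $V$, put $E:=E_1\cap U$ and replace $E'$ by $h(E)$: as $x$ is a non-isolated point of a metrisable space, every neighbourhood of $x$ is infinite, so $E$ and $E'$ are infinite, disjoint ($E\subseteq U$, $E'\subseteq V$), homeomorphic, and each has infinite complement. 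If $x$ is isolated, I first note $\End(\Sigma)$ has at least two isolated ends (punctures): every clopen neighbourhood of $y$ contains a homeomorphic copy of a neighbourhood of the isolated point $x$, hence contains an isolated point of $\End(\Sigma)$; applying this to $V$ yields a puncture $e'\in V$, necessarily $\neq x$, so $x,e'$ are distinct punctures. Write $\End(\Sigma)=P\sqcup Z$ with $P$ the set of isolated ends ($|P|\geq 2$) and $Z$ the perfect part. As $c(\Sigma)\geq 2$ forces $|\End(\Sigma)|\geq 5$, there are two sub-cases: if $\End(\Sigma)$ is finite then $Z=\emptyset$ and $|P|\geq 5$, so I take $E,E'$ to be disjoint $2$-element subsets of $P$; if $\End(\Sigma)$ is infinite then $Z$ is a Cantor set, so I fix distinct $p_1,p_2\in P$ and disjoint non-empty clopen $Z_1,Z_2\subseteq Z$, and take $E:=\{p_1\}\cup Z_1$, $E':=\{p_2\}\cup Z_2$, which are disjoint, homeomorphic, each with infinite complement.

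With $E,E'$ at hand, realize $E$ (a clopen neighbourhood of any of its ends) by a separating simple closed curve $c$ bounding a genus-$0$ subsurface carrying the ends $E$, and realize $E'\subseteq\End(\Sigma)\setminus E$ by a curve $c'$ in the interior of the other component, so $c\cap c'=\emptyset$. Both are essential: by construction each of $E$, $E'$, $\End(\Sigma)\setminus E$, $\End(\Sigma)\setminus E'$ has $\geq 2$ ends, so no complementary piece is a disc or once-punctured disc. They have the same topological type: $\Sigma\setminus c$ is the disjoint union of genus-$0$ surfaces with end sets $E\cup\{\ast\}$ and $(\End(\Sigma)\setminus E)\cup\{\ast\}$, and likewise for $c'$ with $E'$ replacing $E$; the first pieces agree since $E\simeq E'$, and for the second pieces, writing $\End(\Sigma)\setminus E=E'\sqcup R$ and $\End(\Sigma)\setminus E'=E\sqcup R$ with $R:=\End(\Sigma)\setminus(E\cup E')$, the homeomorphism $h$ extended by the identity on $R\cup\{\ast\}$ does the job, so Richards' theorem identifies $\Sigma\setminus c$ with $\Sigma\setminus c'$. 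This is precisely the input required by Lemma~\ref{lem_non_ea_1}.

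I expect the case $x$ isolated to be the main obstacle: there the Mann--Rafi ordering does not localise a shrinkable neighbourhood of $x$, so one must instead extract the structural fact that $\Sigma$ then has at least two punctures and finish by a construction that branches on whether $\End(\Sigma)$ is finite or has a Cantor part. One must also keep track of the isolated ends created by cutting (the points ``$\ast$'' above) when matching up the complementary pieces, and it is exactly the hypothesis $c(\Sigma)\geq 2$ --- hence $\geq 5$ ends --- that guarantees the curves one builds are essential rather than peripheral.
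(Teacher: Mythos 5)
Your overall strategy coincides with the paper's: extract from $x\preccurlyeq y$ two disjoint clopen subsets $E\simeq E'$ of $\End(\Sigma)$, realise them by boundary curves of genus-zero subsurfaces, check via Richards' theorem that the two curves have the same topological type, and feed the involution from Lemma~\ref{lem_non_ea_1} into Lemma~\ref{lem_stab}. Your Case~1 and your verification that $\End(\Sigma)\setminus E\simeq\End(\Sigma)\setminus E'$ (via $h$ extended by the identity on $R$) are fine and in fact more explicit than the paper. The divergence, and the problem, is your Case~2. The paper avoids any case split on whether $x$ is isolated: from $x\preccurlyeq y$ it first produces a second end $x'\neq x$ that is \emph{equivalent} to $x$ (the image of $x$ under the embedding of a neighbourhood of $x$ into a clopen set missing $x$), and then takes disjoint homeomorphic clopen neighbourhoods $D\ni x$, $D'\ni x'$. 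Your Case~2, by contrast, rests on the claim that $\End(\Sigma)=P\sqcup Z$ with $P$ the isolated ends and $Z$ the perfect part, and that $Z$ is a Cantor set whenever $\End(\Sigma)$ is infinite. Both claims are false: the Cantor--Bendixson scattered part need not consist only of isolated points, and the perfect kernel can be empty for an infinite end space (e.g.\ $\End(\Sigma)\cong\omega+1$ or $\omega^2+1$, which do occur for genus-zero surfaces and do admit comparable isolated ends). In that situation your instruction ``fix disjoint non-empty clopen $Z_1,Z_2\subseteq Z$'' has nothing to act on; and even when $Z\neq\emptyset$, a set clopen \emph{in} $Z$ need not be clopen in $\End(\Sigma)$ (e.g.\ when isolated ends accumulate on $Z$), whereas your curve construction requires $E$, $E'$ clopen in $\End(\Sigma)$. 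The gap is repairable --- if $x$ is isolated one shows as you do that there is a second puncture, then splits on whether the set $P$ of punctures has at least $4$ elements (take two disjoint pairs) or $|P|\in\{2,3\}$ (then $\End(\Sigma)\setminus P$ is a nonempty clopen perfect set, hence a clopen Cantor set, and your $\{p_i\}\cup Z_i$ recipe works) --- but as written the argument does not cover all surfaces.

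Two smaller remarks. First, your reason that $c\neq c'$ (``they bound different end sets'') is not sufficient: two disjoint non-isotopic-looking curves are isotopic exactly when they cobound an annulus, i.e.\ when $R=\End(\Sigma)\setminus(E\cup E')=\emptyset$, which your Case~1 does not exclude (take $E=U$, $E'=V$ with $U\sqcup V=\End(\Sigma)$, as can happen for the sphere minus a Cantor set). You need $R\neq\emptyset$ both to get two distinct vertices in $F$ and to have the pair of pants $S_{0,3}$ used in the proof of Lemma~\ref{lem_non_ea_1}; this is arranged by shrinking $E$ to a proper clopen subset of $E_1\cap U$ still containing $x$. (The paper is equally silent on this point, so it is a shared omission rather than a defect of your approach alone.) Second, your care about essentiality is what forces the troublesome Case~2; note that Lemma~\ref{lem_stab} applies to any continuous action on a countable discrete set, e.g.\ the set of isolated ends, so when $x$ and $x'$ are punctures one can conclude directly from the involution swapping them without manufacturing essential curves at all.
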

\begin{proof}
Suppose for contradiction that $\MCG(\Sigma)$ is extremely amenable but there exist two distinct $x,y\in \End(\Sigma)$ such that $x\preccurlyeq y$. Since $\End(\Sigma)$ is metrisable and \emph{a fortiori} Hausdorff, we can take a clopen neighbourhood $N$ of $y$ so that $x\notin N$. By definition, inside of $N$, there exists a homeomorphic copy of a clopen neighbourhood of $y$ but excludes $x$. This implies the existence of $x'\in \End(\Sigma)$ which is distinct from $x$ but equivalent to $x$. Moreover, one can take a clopen neighbourhood $D$ of $x$ and a clopen neighbourhood $D'$ of $x'$ in the way that $D\simeq D'$ but $D\cap D'=\emptyset$. We should note that $\End(\Sigma)\setminus D$ is also homeomorphic to $\End(\Sigma)\setminus D'$. Now associate $D$ to a subsurface with boundary $S\subset \Sigma$ as described above so that $\End(S)=D\sqcup\{\ast\}$ and find a subsurface $S'\subset \Sigma$ likewise for $D'$. By Richards' theorem, the curves $\partial S$ and $\partial S'$ have the same topological type and satisfies the hypothesis of Lemma \ref{lem_non_ea_1}. But Lemma \ref{lem_non_ea_1} indicates that there exists a mapping class $\phi\in\MCG(\Sigma)$ such that if $F=\{\partial S,\partial S'\}$, then $\phi\in \MCG(\Sigma)_F\setminus \MCG(\Sigma)_{(F)}$. This yields a contradiction to Lemma \ref{lem_stab}.
\end{proof}

Now one can show the following proposition:
\begin{prop}\label{prop_genus_0}
Let $\Sigma$ be an orientable surface with genus $0$. Then $\MCG(\Sigma)$ is extremely amenable if and only if $\Sigma$ is a sphere or a once-punctured sphere, in which case $\MCG(\Sigma)$ is the trivial group.
\end{prop}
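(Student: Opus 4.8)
The plan is to establish the two implications separately, the forward one being the substance, and to organise it by the complexity $c(\Sigma)=n(\Sigma)-3$ of the genus-$0$ surface $\Sigma$. The backward implication is immediate: if $\Sigma$ is a sphere or a once-punctured sphere, then $\MCG(\Sigma)$ is trivial (by Alexander's lemma), and the trivial group is extremely amenable, since it fixes every point of every compact Hausdorff space on which it acts.

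For the forward direction I argue contrapositively. Suppose $\Sigma$ has genus $0$ and is neither a sphere nor a once-punctured sphere, so $n(\Sigma)\geq 2$, and show $\MCG(\Sigma)$ is not extremely amenable. If $2\leq n(\Sigma)\leq 4$, then $\Sigma$ is of finite type, so $\MCG(\Sigma)$ is a countable discrete group, and it is nontrivial for each of $S_{0,2}$, $S_{0,3}$, $S_{0,4}$ by standard computations (cf.\ \cite{farb2011primer}; e.g.\ $\MCG(S_{0,2})$ contains the order-two class that exchanges the two ends). A nontrivial discrete group is locally compact and nontrivial, hence not extremely amenable, as no nontrivial locally compact group is \cite{veech1977topological}.

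The remaining case is $c(\Sigma)\geq 2$, that is, $n(\Sigma)\geq 5$ or $n(\Sigma)$ infinite. Here, by Lemma~\ref{lem_non_comparable} used in contrapositive form, it suffices to exhibit two distinct ends $x,y\in\End(\Sigma)$ with $x\preccurlyeq y$. Since $g(\Sigma)=0$ we have $\End_\infty(\Sigma)=\emptyset$, so two subsets of $\End(\Sigma)$ are \emph{isomorphic} precisely when they are homeomorphic. Now $\End(\Sigma)$ is a nonempty compact metrisable totally disconnected space with at least two points. If it has two distinct isolated points $x,y$, then $\{x\}$ and $\{y\}$ are homeomorphic clopen neighbourhoods, so $x$ and $y$ are equivalent, hence comparable. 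If instead $\End(\Sigma)$ has at most one isolated point, then it is infinite (a finite Hausdorff space with at least two points has at least two isolated points), and deleting that point, if any, leaves a nonempty clopen subset $K\subseteq\End(\Sigma)$ which is perfect (a point isolated in $K$ would be isolated in $\End(\Sigma)$), compact, metrisable and totally disconnected, hence a Cantor set. Picking distinct $x,y\in K$: for every clopen neighbourhood $N$ of $y$, the set $N\cap K$ is a nonempty clopen (in $\End(\Sigma)$, hence in $N$) subset of the Cantor set $K$, so it is itself a Cantor set, and $K$ is a clopen neighbourhood of $x$ that is also a Cantor set; thus $N\cap K$ is homeomorphic to the neighbourhood $K$ of $x$, which gives $x\preccurlyeq y$. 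Either way the required pair exists.

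The one genuinely nontrivial ingredient is the point-set fact that any end space with at least two points contains two comparable ends; the mild subtlety is that the perfect kernel of $\End(\Sigma)$ need not be clopen, which is why I isolate on the number of isolated points---at most one, hence a clopen set to delete---rather than passing to the perfect kernel directly. The rest is bookkeeping of complexities together with Lemma~\ref{lem_non_comparable} and Veech's theorem.
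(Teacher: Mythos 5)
Your proof is correct, but in the main case it takes a genuinely different route from the paper's. Both treatments dispose of the low-complexity surfaces via Veech's theorem and reduce the case $c(\Sigma)\geq 2$ to Lemma~\ref{lem_non_comparable}; the difference is in how one rules out a genus-$0$ surface of complexity at least $2$ with pairwise non-comparable ends. The paper argues that non-comparability forces the $\MCG(\Sigma)$-action on $\End(\Sigma)$ to be trivial, hence $\MCG(\Sigma)=\PMCG(\Sigma)$, and then invokes the Patel--Vlamis residual finiteness theorem to conclude that $\Sigma$ must be of finite type, after which inspection finishes the job. You instead prove directly the point-set fact that any compact metrisable totally disconnected space with at least two points contains two \emph{equivalent} points --- using that for $g(\Sigma)=0$ the relation ``isomorphic'' degenerates to ``homeomorphic'' since $\End_\infty(\Sigma)=\emptyset$, and splitting on whether there are two isolated points or the complement of the (at most one) isolated point is a Cantor set. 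Your argument is more elementary and self-contained, avoiding the external residual-finiteness input entirely; the paper's is shorter on the page but leans on a nontrivial imported theorem. Your handling of the subtlety that the perfect kernel need not be clopen (deleting the single isolated point, which is clopen, rather than passing to the perfect kernel) is exactly right, as is the observation that a nonempty clopen subset of a Cantor set is again a Cantor set.
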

\begin{proof}
If the surface has complexity less than $2$, then it has non-trivial discrete group as mapping class group unless it is a sphere or a once-punctured sphere. So by the virtue of Lemma \ref{lem_non_comparable}, it remains to show that there is no surface with genus $0$, complexity at least $2$ and pairwise non-comparable ends unless it is a sphere or a once-punctured sphere, where the statement is satisfied vacuously. Indeed, let $\Sigma$ be a such surface. Suppose that there exist distinct $x,y\in\End(\Sigma)$ and an element $g\in\MCG(\Sigma)$ so that $gx=y$. Then necessarily $x\preccurlyeq y$ as remarked above, which contradicts to the non-comparing assumption. Hence the action of $\MCG(\Sigma)$ on $\End(\Sigma)$ must be trivial. This implies that $\MCG(\Sigma)=\PMCG(\Sigma)$. By \cite[Theorem 3]{patel2018algebraic}, this means that $\MCG(\Sigma)=\PMCG(\Sigma)$ are simultaneously residually finite and as a result, the surface $\Sigma$ must be of finite type. But as the ends of $\Sigma$ must be all non-comparable, it only happens when $\Sigma$ is a sphere or a once punctured sphere, in which cases $\MCG(\Sigma)$ is trivial.
\end{proof}

Let $S^1_1$ be the surface of genus $1$ with one boundary component and let $S_{1,1}$ be once-punctured torus. Then there is an element $g\in \MCG(S_1^1)$ and a simple closed curve $c$ on $S^1_1$ such that $\{g^n(c):n\in\mathbb{Z}\}$ is finite. Indeed, by the inclusion homomorphism \cite[Theorem 3.18]{farb2011primer}, we have the short sequence
$$1\to \mathbb{Z}\to \MCG(S^1_1)\to \mathrm{SL}_2(\mathbb{Z})\simeq \MCG(S_{1,1})\to 1.$$
Note that 
$$\phi\coloneqq\begin{pmatrix}0 & -1\\ 1& 0\end{pmatrix}\in \mathrm{SL}_2(\mathbb{Z})\simeq \MCG(S_{1,1})$$
is a torsion element and that there is a simple closed curve $c$ on $S_{1,1}$, which can also be regarded as a curve in $S^1_1$, such that $\{\phi^n(c):n\in\mathbb{Z}\}$ is finite. As a result, the orbit of $c$ under the action of any pre-image $g\in \MCG(S^1_1)$ of $\phi\in \MCG(S_{1,1})$ is finite.

With the observation above, one can now prove Theorem \ref{thm1}:

\begin{proof}[Proof of Theorem \ref{thm1}]
If the surface $\Sigma$ has genus $0$, then the extreme amenability of $\MCG(\Sigma)$ is determined by Proposition \ref{prop_genus_0}. For the torus, the mapping class group is not extremely amenable since it is a non-trivial discrete group. Suppose now that the surface $\Sigma$ has complexity at least $2$ and non-zero genus. Then $\Sigma$ contains an essential (sub)surface that is homeomorphic to $S_1^1$. Then we take a $g\in \MCG(S^1_1)$ and a curve $c$ on $S^1_1\subset \Sigma$ so that $\{g^n(c):n\in\mathbb{Z}\}$ is finite. By the virtue of the inclusion homomorphism \cite[Theorem 3.18]{farb2011primer}, one can extend $g$ by identity to an element in $\MCG(\Sigma)$. Now the pointwise stabiliser of $F\coloneqq\{g^n(c):n\in\mathbb{Z}\}$ in $\MCG(\Sigma)$ is different from its setwise stabiliser. Hence by Lemma \ref{lem_stab}, $\MCG(\Sigma)$ is not extremely amenable.
\end{proof}

Finally, there are still some details in Theorem \ref{thm2} that need further clarifications.
\begin{lem}\label{lem_dense}
Let $G$ be a topological group and $H$ be a dense subgroup of $G$. Then $G$ is extremely amenable if and only if $H$ is.
\end{lem}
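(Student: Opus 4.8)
The plan is to derive the equivalence from a comparison of the greatest ambits of $G$ and $H$, together with one elementary observation about point stabilisers. Recall that a topological group $K$ is extremely amenable exactly when its greatest ambit $S(K)$ --- the Samuel compactification of $K$ for its right uniformity, equivalently the spectrum of the $C^{*}$-algebra $\mathcal{A}(K)$ of bounded right-uniformly continuous functions on $K$ --- has a $K$-fixed point; and that whenever $K$ acts continuously on a compact Hausdorff space $Y$, every stabiliser $K_{y}=\{k\in K:ky=y\}$ is closed in $K$, because $k\mapsto ky$ is continuous and $\{y\}$ is closed. The implication ``$H$ extremely amenable $\Rightarrow$ $G$ extremely amenable'' is then immediate from the stabiliser remark alone: given a continuous $G$-action on a compact Hausdorff $X$, the restricted $H$-action has a fixed point $x_{0}$, so $G_{x_{0}}$ is a closed subgroup containing the dense subgroup $H$, hence $G_{x_{0}}=G$ and $x_{0}$ is $G$-fixed.

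For the reverse implication (and, in fact, for a unified treatment of both), I would first identify $S(G)$ with $S(H)$. The right uniformity of $H$ is precisely the restriction of the right uniformity of $G$, and $H$ is dense in $G$; since $\mathbb{R}$ is complete, the standard extension theorem for uniformly continuous maps shows that restriction $f\mapsto f|_{H}$ is an isometry of $\mathcal{A}(G)$ onto $\mathcal{A}(H)$ --- injective by density and continuity, surjective because a bounded right-uniformly continuous function on $H$ extends to a bounded right-uniformly continuous function on $G$. This isometry preserves the $C^{*}$-structure and is equivariant for the $H$-translation actions, and it transports the $G$-action on $\mathcal{A}(G)$ to a $G$-action on $\mathcal{A}(H)$ extending the $H$-action. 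Dualising (Gelfand duality) yields an $H$-equivariant homeomorphism $S(G)\cong S(H)$ sending distinguished point to distinguished point, under which $S(G)$ carries a $G$-action extending the transported $H$-action.

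It remains to note that in $S(G)$ a point $p$ is fixed by $H$ if and only if it is fixed by $G$: one direction is trivial, and conversely $G_{p}$ is a closed subgroup containing the dense subgroup $H$, so $G_{p}=G$. Chaining the equivalences: $G$ is extremely amenable $\iff$ $S(G)$ has a $G$-fixed point $\iff$ $S(G)$ has an $H$-fixed point $\iff$ $S(H)$ has an $H$-fixed point $\iff$ $H$ is extremely amenable. The only real obstacle is conceptual: a continuous $H$-action on a compact space need not extend to a continuous $G$-action, so the naive ``restrict the action'' idea handles only the easy direction. The substance of the converse is the identification $\mathcal{A}(H)=\mathcal{A}(G)|_{H}$ --- i.e.\ verifying that bounded one-sided uniformly continuous functions on the dense subgroup extend equivariantly to the whole group --- after which the fixed-point characterisation of extreme amenability and the closedness of point stabilisers close the argument with nothing further to do.
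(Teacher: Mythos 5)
Your proof is correct and follows essentially the same route as the paper: both arguments rest on the fixed-point characterisation via the Samuel compactification and on the identification $\mathrm{RUCB}(G)\simeq\mathrm{RUCB}(H)$ obtained by restricting and uniformly extending bounded right-uniformly continuous functions across the dense subgroup. You simply supply more detail than the paper does (the extension theorem into complete uniform spaces, the $H$-equivariance of the Gelfand dual, and the closed-stabiliser/density argument transferring $H$-fixed points to $G$-fixed points), which is all to the good.
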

\begin{proof}
Note that a topological group $G$ is extremely amenable if and only if it admits a $G$-invariant multiplicative mean over $\mathrm{RUCB}(G)$, the space of right-uniformly continuous functions on $G$, or equivalently the $G$-action on its \emph{Samuel compactification} $\sigma G$ has a fixed point (see for example \cite[\S 1.1]{pestov2006dynamics}). But if $H$ is a dense subgroup of $G$, then $\mathrm{RUCB}(H)\simeq \mathrm{RUCB}(G)$ and the canonical continuous $H$-action on the multiplicative means of $\mathrm{RUCB}(G)$ is extended continuously to the canonical $G$-action on it via taking the limit along Cauchy sequences. Conversely, the restriction of a continuous $G$-action on $H$ yields an $H$-action. Hence $H$ and $G$ can only be simultaneously extremely amenable.
\end{proof}
\begin{rem}
More general, for every topological group $G$, a $G$-action on a compact space $X$ is continuous if and only if it is uniformly continuous, \emph{i.e.} uniformly continuous as a map $G\times X\to X$ where $G$ is carrying the right-uniformity. Since Cauchy nets are preserved under uniformly continuous map, every continuous $G$-action on the compact space $X$ can be extended to a continuous action of its completion on $X$.
\end{rem}

Now we are ready to prove Theorem \ref{thm2}.

\begin{proof}[Proof of Theorem \ref{thm2}]
For any subgroup $G<\MCG(\Sigma)$ with complexity $c(\Sigma)>2$ that contains a mapping class $\phi$ such that for some simple closed curve $c$ on the surface, the orbit $F\coloneqq\{\phi^n(c):n\in\mathbb{Z}\}$ is finite, it is clear that $\phi$ belongs to $\overline{G}_{F}$ but not $\overline{G}_{(F)}$. Hence the closure of such subgroup $G$ in $\MCG(\Sigma)$ is not extremely amenable. It follows from Lemma \ref{lem_dense} that $G$ itself is not extremely amenable.
\end{proof}

\section{Perspectives}
Using KPT correspondence, this paper shows that the mapping class groups of all but finitely many orientable surfaces can never be the automorphism group of a countable first-order relational structure $\mathcal{F}$ such that $\mathrm{Age}(\mathcal{F})$ has Ramsey property (see \cite{kechris2005fraisse} for definitions). More generally, we can also ask the following question:
\begin{ques}\label{q1}
Given a countable first-order relational structure $\mathcal{F}$, how can one detect if its automorphism group $\mathrm{Aut}(\mathcal{F})$ can be realised as the (extended) mapping class group of an orientable surface?
\end{ques}

In a recent paper, Disarlo, Koberda and Gonz\'{a}lez \cite{disarlo2023model} establish a model theoretic connection between the mapping class groups and the curve graph of non-sporadic finite-type surfaces, which is motivated by Ivanov's metaconjecture \cite{ivanov2006fifteen}. Following their ideas, another way to ask Question \ref{q1} is the following:
\begin{ques}
Given a graph on countable vertices, how can one detect if it is the curve graph of an orientable surface?
\end{ques}

Another notion that is closely related to the extreme amenability is the amenability of a topological group, \emph{i.e.} every continuous group action on a compact Hausdorff space admits an invariant probability measure over the space. Although the non extreme amenability is already clear, it remains unknown if there are amenable big mapping class groups. For every but finitely many finite-type orientable surfaces, the mapping class group is non-amenable because as a discrete group, it contains a non-abelian free subgroup on two generators. If the surface is of infinite type, the amenability of its mapping class group is less clear. For the surfaces $\Sigma$ of infinite type having a non-displaceable subsurface $S$ of finite type, one can construct a \emph{blown-up projection complex} from the curve graphs of the $\MCG(\Sigma)$-orbit of $S$, see for example \cite{horbez_qing_rafi_2022,domat2022big}. Equipped with the combinatorial metric, the blown-up projection complex is a separable geodesic Gromov-hyperbolic space on which the mapping class group $\MCG(\Sigma)$ acts continuously by isometries and the $\MCG(\Sigma)$-action is of general type (or sometimes non-elementary). However, an amenable group can never have a continuous action of general type on a separable geodesic Gromov-hyperbolic space by isometries. This implies that these big mapping class groups are not amenable.

\section*{Acknowledgements}
The author thanks George Domat, Sergio Domingo, Bruno Duchesne and Jes\'{u}s Hernandez Hernandez for helpful discussions.

\printbibliography

\noindent{\sc Université Paris-Saclay, Laboratoire de Mathématique d'Orsay, 91405, Orsay, France}

\noindent{\it Email address:} {\tt \href{mailto:yusen.long@universite-paris-saclay.fr}{yusen.long@universite-paris-saclay.fr}}
\end{document}